\documentclass[A4paper,12pt]{article}
\usepackage{amsmath,amsthm,amssymb}
\usepackage[margin=1.2in]{geometry}

\usepackage{color}
\usepackage{array}
\usepackage{comment}

\newtheorem{thm}{Theorem}[section]

\newtheorem{lem}[thm]{Lemma}

\newtheorem{exam}[thm]{Example}

\newtheorem{rem}[thm]{Remark}

\newtheorem{Cor}[thm]{Corollary}

\numberwithin{equation}{section}

\newcommand{\vect}[1]{\mbox{\boldmath ${#1}$}}

\makeatletter

\def\authoranddepartment#1{\def\@department{#1}}
\def\authors#1{\def\@authors{#1}}
\def\abstract#1{\def\@abstract{{\bf Abstract: }#1}}

\def\@maketitle{
\begin{center}
{\Large\bf \@title \par}
\end{center}
\begin{center}
{\large \@authors \par}
\end{center}
\begin{center}
{\@department \par}
\end{center}
{\@abstract}
}

\makeatother

\title{A Method of Construction of Pairwise Additive Balanced Incomplete Block Designs}
\authors{Kazuki Matsubara$^1$ and Satoru Kadowaki$^2$\footnote{Deceased, 2022}}
 \authoranddepartment{ ${}^1$Faculty of Education, Saitama University \\
 ${}^2$Department of Mathematics, Matsue College of Technology}
 
 \abstract{
 The existence of sets of balanced incomplete block (BIB) designs with pairwise additivity, called pairwise additive BIB designs, has been studied through direct and recursive constructions in the literature.
This paper presents a new method of constructing such designs from affine resolvable semi-regular group divisible (ARSRGD) designs.
Since many known ARSRGD designs are obtained from generalized Hadamard matrices, we also describe the corresponding construction of pairwise additive BIB designs from generalized Hadamard matrices.
Using the construction together with known existence results for generalized Hadamard matrices, we obtain several new infinite series of pairwise additive BIB designs, including designs with parameter sets whose existence was previously unknown.
}

\begin{document}

\maketitle

\vspace{2mm}
\noindent
\textbf{Keywords:} 
\textit{pairwise additivity; affine resolvable group divisible design; difference matrix; generalized Hadamard matrix}

\vspace{2mm}
\noindent  
\section{Introduction}\label{sec:introduction}
A \textit{block design} is a system $(V, \mathcal B)$, denoted by BD($v,b,r,k$), with $v$ points ($|V| = v$) and $b$ blocks ($|\mathcal B| = b$) containing $k$ different points, each point appearing in $r$ different blocks.
A $(V, \mathcal B)$ is called a \textit{balanced incomplete block (BIB) design}, denoted by BIBD$(v, b, r, k, \lambda)$ if any two different points appear in exactly $\lambda$ blocks.
Since the parameters satisfy $vr=bk$ and $\lambda(v-1) = r(k-1)$,
the parameters $b$ and $r$ are determined by $v$, $k$, and $\lambda$ (cf.\ \cite{R1988}). 
Thus, we use $v$, $k$, and $\lambda$ as the essential parameters of a BIB design throughout this paper.
An \textit{incidence matrix} $\vect N = (n_{ij})$ is a $v \times b$ matrix such that $n_{ij} = 1$ or $0$ for all $i$ ($=1,2,\ldots , v$) and $j$ ($= 1,2,\ldots , b$), according as the $i$-th point occurs in the $j$-th block or otherwise. 

Let  $\vect N_h = (a_{ij}^{(h)})$ ($1\le h\le \ell$) be $\ell$ incidence matrices of BIB designs with the same parameters such that at most one of $a_{ij}^{(h_1)}$ and $a_{ij}^{(h_2)}$ is equal to $1$, i.e., $a_{ij}^{(h_1)}a_{ij}^{(h_2)}=0$, for any $i$, $j$ and any distinct $h_1, h_2 \in \{1,2, \ldots, \ell\}$. 
A set $\{\vect N_1, \vect N_2, \ldots, \vect N_{\ell}\}$ is called a \textit{pairwise additive BIB design}, denoted by PAB($\ell$; $v$, $k$, $\lambda$), if $\vect N_h$ is the incidence matrix of a BIB design $(V,\mathcal B_h)$, BIBD($v,b,r,k,\lambda$), for each $h\in \{1,2,\ldots, \ell\}$ and $\vect N_{h_1} + \vect N_{h_2}$ is the incidence matrix of a BIB design with parameters
\begin{eqnarray*}\label{eq:parameters}
v^\ast = v,\ b^\ast = b,\ r^\ast = 2r,\ k^\ast = 2k,\ \lambda^\ast = 2r(2k-1)/(v-1)
\end{eqnarray*}
for any distinct  $ h_1,  h_2 \in \{1,2,\ldots, \ell\}$.

Let $s = v/k$, where $s$ need not be an integer, unlike other parameters.
Clearly, $\ell \le s$ and the existence of a PAB($\ell; v,k,\lambda$) implies the existence of a PAB($\ell'; v,k,\lambda$) with $\ell'<\ell$.
This paper focuses on the pairwise additive BIB designs with $\ell = s=v/k$, which are called \textit{full pairwise additive BIB designs}.
For convenience and to distinguish this full case from general PABs, a PAB$(\ell; v, k, \lambda)$ with $\ell =v/k$ is also denoted by full PAB$(v,k,\lambda)$.
These full pairwise additive BIB designs were called additive BIB designs in \cite{SMMKK2007}.
Obviously, $\sum_{h=1}^s \vect N_h$ forms an all-one matrix for any full pairwise additive BIB design.

The concept of pairwise additivity was introduced in \cite{MSMKK2006}.
Note that, in the literature, a pairwise additive BIB design is defined as a set of BIB designs, i.e., $(V,\mathcal B_1), (V,\mathcal B_2),\ldots, (V,\mathcal B_{\ell})$, together with a suitable ordering of their blocks.
In this paper, we instead define it as a set of the incidence matrices, so that the correspondence between blocks is explicitly specified.
Existence results, construction methods, and applications have been discussed in \cite{SMMKK2007}.
It is also known that pairwise additive BIB designs are closely related to other combinatorial structures, e.g., compatible minimal partitions of triple systems and  balanced nested designs given in \cite{CR1999} and \cite{FKKMS2002}, respectively.
Several construction methods of pairwise additive BIB designs have been discussed mainly for $\ell=2,3$, $k=2,3$, and $\lambda =1$ in \cite{MK2013-1,MK2013-2,MK2014,MK2015,MHK2015,MSMKK2006}.
For the case $\ell=v/k$, \cite{SMMKK2007} and \cite{SKJ2008} show the existence of a full PAB$(2^n,2,1)$ and a full PAB$(3^n,3,1)$ for any $n\ge 2$, respectively.
Furthermore, some classes of full pairwise additive BIB designs have been constructed from BIB designs with the resolvability introduced in Section \ref{sec:preliminary}. 

The lower bound for $\lambda$ in a PAB($\ell; v,k,\lambda$) is given by
\begin{eqnarray}
 \lambda \ge 
 \left\{
  \begin{array}{cl}
   (k-1)/2 & \mbox{if}\ k\equiv 1\ (\mbox{mod}\ 2)\\
   k-1 & \mbox{otherwise}
  \end{array}.
 \right.\label{eq:lambda}
\end{eqnarray} 
In \cite{SMMKK2007}, the spectrum of full pairwise additive BIB designs within the scope of parameters of full pairwise additive BIB designs with $s\ge 3, v\le 100, k\ge 2, r\le 20$, and $\lambda <k$ is given in the Appendix, together with the list of the $20$ sets of parameters for each of which the existence of a full PAB$(v,k,\lambda)$ was unknown.
Of those $20$ sets, a full PAB$(10,2,1)$ and a full PAB$(27,3,1)$ have been constructed in \cite{MK2013-1} and \cite{SKJ2008}, respectively.
The following is the list of the remaining $18$ sets of parameters for each of which the existence of full pairwise additive BIB designs was still unknown.
\begin{table}[hbtp]
\centering
 $\begin{array}{c|cccccccccccccccccc}
 \mbox{No.} &1&2&3&4&5&6&7&8&9&10
 &11&12&13&14&15&16&17&18\\
 \hline\hline
  s & 3 & 3 & 3 & 3 & 4  & 5 & 5 & 5 & 6 
     & 6 & 7 & 7 & 7 & 7 & 9 & 10 & 11 & 13 \\
  \hline
  v & 15 & 21 & 33 & 39 & 20 & 15 & 20 & 35 & 12 
     & 18 & 14 & 21 & 21 & 35 & 18 & 20 & 33 & 39  \\
  \hline
  k & 5 & 7 & 11 & 13 & 5 & 3 & 4 & 7 & 2 
     & 3 & 2 & 3 & 3 & 5 & 2 & 2 & 3 & 3\\
  \hline
  \lambda 
     & 4 & 3 & 5 & 6 & 4 & 1 & 3 & 3 & 1 & 2 
     & 1 & 1 & 2 & 2 & 1 & 1 & 1 & 1
 \end{array}$
 \caption{The list of unknown parameters of full pairwise additive BIB designs.}
 \label{table}
\end{table}

The purpose of this paper is to provide a new construction method for full pairwise additive BIB designs and to obtain several new infinite series of full PABs, thereby establishing the existence of full PABs for Nos.\,7, 11, and 15 in Table \ref{table}.
Section \ref{sec:preliminary} reviews some properties of  PABs and some results on affine resolvable semi-regular group divisible (ARSRGD) designs in the literature.
In Section \ref{sec:fromSRGD}, as the main result, we provide a new construction method for full PABs with $\lambda = k-1$ from ARSRGD designs.
In Section \ref{sec:GHM}, we consider generalized Hadamard matrices as a source of full PABs based on the construction in Section \ref{sec:fromSRGD}.
First, known existence results for generalized Hadamard matrices are reviewed. 
Then, the construction is applied to obtain several new infinite series of full PABs with $\lambda=k-1$, which satisfy the lower bound in (\ref{eq:lambda}).
In Section \ref{sec:conclusion}, the results of this paper are summarized.
Finally, some contributions to other combinatorial structures are also described as future work.

\section{Preliminaries}\label{sec:preliminary}
In this section, we introduce the basic properties of pairwise additivity and some results on affine resolvable semi-regular group divisible designs.

For a BIBD($v,b,r,k,\lambda$), the incidence matrix $\vect N = (n_{ij})$ satisfies $\sum_{i=1}^v n_{ij}=k, \sum_{j=1}^b n_{ij}=r$, and
\begin{align}\label{eq:N}
 \vect N \vect N^T = (r-\lambda)\vect I_v + \lambda \vect J_v,
\end{align}
where $\vect I_v$ and $\vect J_v$ are the identity matrix and all-one matrix of order $v$, respectively.
A set of $\ell$ incidence matrices $\vect N_1, \vect N_2, \ldots, \vect N_{\ell}$ of a PAB($\ell; v,k,\lambda$) satisfies the following conditions:
\begin{align}\label{eq:N_1+N_2}
 \left(
 \vect N_{h_1} + \vect N_{h_2}
 \right)
 \left(
 \vect N_{h_1} + \vect N_{h_2}
 \right)^T
 = (2r-\lambda^\ast)\vect I_v + \lambda^\ast \vect J_v
 \end{align}
for any distinct $h_1, h_2\in \{1,2,\ldots,\ell\}$, where $\lambda^\ast = 2r(2k-1)/(v-1) = 2\lambda(2k-1)/(k-1)$.
Combining (\ref{eq:N}) and (\ref{eq:N_1+N_2}) leads to
\begin{align}\label{eq:N_1N_2}
 \vect N_{h_1}\vect N_{h_2}^T + \vect N_{h_2}\vect N_{h_1}^T 
 = (\lambda^\ast -2\lambda)\left(\vect J_v -  \vect I_v\right).
\end{align}
Assuming (\ref{eq:N}), condition (\ref{eq:N_1N_2}) is equivalent to (\ref{eq:N_1+N_2}). Hence, (\ref{eq:N}) and (\ref{eq:N_1N_2}) together are equivalent to the definition of pairwise additivity.

A BD($v,b,r,k$) is called a \textit{group divisible} (GD) design with parameters 
$v=mn,$ $b,$ $r,$ $k,$ $\lambda_1$, and $\lambda_2$, denoted by GD$(m,n,k,\lambda_1,\lambda_2)$,
if the $v$ ($=mn$) points are divided into $m$ groups of $n$ points such that any two points in the same group occur together in exactly $\lambda_1$ blocks, whereas any two points from different groups occur together in exactly 
$\lambda_2$ blocks.
Then the parameters satisfy $vr=bk$ and $r(k-1) = (n-1)\lambda_1 + n(m-1)\lambda_2$.
The GD designs are further classified into three subclasses: 
singular if $r-\lambda_1=0$; 
semi-regular (SR) if $r-\lambda_1>0$ and $rk-v\lambda_2=0$; 
and regular if $r-\lambda_1>0$ and $rk-v\lambda_2>0$ (cf. \cite{R1988}).
In particular, the following property is shown in the literature.

\begin{lem}[{\cite[Theorem 8.5.6]{R1988}}]
\label{lem:semi-regular}
 For a semi-regular GD design, $k$ is divisible by $m$.
 If $k=cm$, then every block contains $c$ symbols from each group.
\end{lem}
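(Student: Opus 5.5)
The plan is to encode the group structure in the incidence matrix, identify the eigenvalue $rk-v\lambda_2$ of $\vect N\vect N^T$, and use the semi-regularity condition $rk-v\lambda_2=0$ to force every block to meet all groups equally.

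Let $\vect N$ be the $v\times b$ incidence matrix of the semi-regular GD$(m,n,k,\lambda_1,\lambda_2)$, with the points ordered group by group. Let $\vect e_{i}$ be the $0/1$ indicator vector of the $i$-th group $G_i$, and let $\vect j_v$ be the all-one vector of length $v$. The GD conditions say
\begin{align*}
\vect N\vect N^T=(r-\lambda_1)\vect I_v+(\lambda_1-\lambda_2)(\vect I_m\otimes \vect J_n)+\lambda_2\vect J_v .
\end{align*}

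First, apply this to $\vect e_i$. Since $(\vect I_m\otimes\vect J_n)\vect e_i=n\vect e_i$ and $\vect J_v\vect e_i=n\vect j_v$, we get
\begin{align*}
\vect N\vect N^T\vect e_i=\bigl(r-\lambda_1+n(\lambda_1-\lambda_2)\bigr)\vect e_i+n\lambda_2\vect j_v .
\end{align*}
Next, simplify the coefficient $r+(n-1)\lambda_1-n\lambda_2$ using the two parameter relations.
\begin{itemize}
\item From $r(k-1)=(n-1)\lambda_1+n(m-1)\lambda_2$ we get $(n-1)\lambda_1=rk-r-nm\lambda_2+n\lambda_2$.
\item Semi-regularity gives $rk=v\lambda_2=mn\lambda_2$, so $(n-1)\lambda_1=n\lambda_2-r$.
\end{itemize}
Hence the coefficient vanishes. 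Therefore, for $\vect u=\vect e_i-\vect e_j$ with $i\neq j$, the $\vect j_v$ terms cancel and $\vect N\vect N^T\vect u=\vect 0$. This is just the statement that $rk-v\lambda_2$ is the eigenvalue on group-constant vectors summing to zero, and it equals zero here.

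From $\vect N\vect N^T\vect u=\vect 0$ we get $\|\vect N^T\vect u\|^2=\vect u^T\vect N\vect N^T\vect u=0$, so $\vect N^T\vect u=\vect 0$. The entry of $\vect N^T\vect u$ indexed by a block $B$ is $|B\cap G_i|-|B\cap G_j|$. So every block meets every group in the same number $c$ of points. Summing over the $m$ groups gives $k=cm$, hence $m\mid k$, and every block contains exactly $c$ points from each group.

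The only step needing care is the parameter computation showing that the coefficient $r+(n-1)\lambda_1-n\lambda_2$ vanishes. Once that holds, the positive semidefiniteness argument is immediate.
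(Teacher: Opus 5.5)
Your proof is correct. The parameter relations give $r+(n-1)\lambda_1-n\lambda_2=rk-v\lambda_2=0$, so $\vect N\vect N^T$ annihilates the group contrasts $\vect e_i-\vect e_j$, and positive semidefiniteness then forces $\vect N^T(\vect e_i-\vect e_j)=\vect 0$; equivalently, $\sum_B(|B\cap G_i|-|B\cap G_j|)^2=2n(rk-v\lambda_2)=0$. The paper does not prove this lemma itself but quotes it from Raghavarao, and your argument is the standard eigenvalue proof of this classical Bose--Connor result, so there is nothing to add.
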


Next, we introduce the resolvability of blocks closely related to full PABs.
A BD($v, b, r, k$) is said to be \textit{resolvable} if the $b$ blocks can be grouped into $r$ resolution sets of $b/r(=v/k)$ blocks such that every point occurs exactly once in each resolution set. 
Furthermore, a resolvable BD is said to be \textit{affine resolvable} (AR) if every two blocks belonging to different resolution sets intersect in the same number, say $q$ ($=k\lambda_2/r$), of points (see \cite{KK2018}).

For the construction of full PABs, while \cite{SMMKK2007} mainly uses affine resolvable BIB designs, we mainly use affine resolvable GD designs with $k=m$, $\lambda_1 = 0$, and $m,n\ge 2$.
Note that no affine resolvable regular GD design exists \cite{K2008}.
Moreover, $\lambda_1=0$ implies that the design is not singular.
Hence, the affine resolvable GD designs used in this paper are semi-regular, and their existence is equivalent to the existence of a class of orthogonal arrays (see \cite[Theorem 8.5.7]{R1988}).
Although it is not entirely known for which parameters affine resolvable semi-regular (ARSR) GD designs exist, the ARSRGD designs can be characterized as follows.

\begin{lem}[{\cite[Corollary 8.5.10.1]{R1988}}]
\label{lem:affine}
 A resolvable SRGD design with parameters $v=mn, b, r, k, \lambda_1, \lambda_2, m, n$ is affine resolvable if and only if (a) $b=v-m+r$ and (b) $k^2/n$ is an integer.
\end{lem}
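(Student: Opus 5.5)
We prove both directions by comparing the spectra of $\vect N\vect N^T$ and $\vect N^T\vect N$. Let $s=b/r=v/k$. Write the incidence matrix of the resolvable SRGD design as $\vect N=[\vect N_1\ \vect N_2\ \cdots\ \vect N_r]$, where $\vect N_i$ ($v\times s$) belongs to the $i$-th resolution set. Then $\vect N_i\vect 1_s=\vect 1_v$ and $\vect 1_v^T\vect N_i=k\vect 1_s^T$. Ordering the points group by group, the GD property gives
\[
\vect N\vect N^T=(r-\lambda_1)\vect I_v+(\lambda_1-\lambda_2)(\vect I_m\otimes \vect J_n)+\lambda_2\vect J_v .
\]
Its eigenvalues are $rk$ (once), $rk-v\lambda_2$ (multiplicity $m-1$) and $r-\lambda_1$ (multiplicity $m(n-1)$). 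Semi-regularity makes the middle one $0$, so $\operatorname{rank}\vect N=v-m+1$. Resolvability gives the $r-1$ independent column relations $\vect N_i\vect 1_s=\vect N_1\vect 1_s$. Hence $\operatorname{rank}\vect N\le b-r+1$, i.e. $b\ge v-m+r$ always, and condition (a) is exactly the equality case. So $\ker\vect N=\ker \vect N^T\vect N$ has dimension $b-v+m-1\ge r-1$, and (a) holds iff this kernel is exactly $W_0=\{\vect I_r\otimes\vect 1_s$-combinations with coefficient sum $0\}$.

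\emph{Necessity.} Suppose the design is affine resolvable, with intersection number $q$ between blocks of different classes. Then $\vect N_i^T\vect N_i=k\vect I_s$ and $\vect N_i^T\vect N_j=q\vect J_s$ for $i\ne j$. A block of class $j$ is partitioned by the $s$ blocks of class $i$, so $sq=k$, i.e. $q=k^2/v$. Thus
\[
\vect N^T\vect N=\vect I_r\otimes(k\vect I_s-q\vect J_s)+q\vect J_b .
\]
Its eigenvalues are $k$ (multiplicity $r(s-1)$), $k-qs=0$ (multiplicity $r-1$) and $rk$ (once). So $\operatorname{rank}\vect N=b-r+1$, which gives (a). Integrality of $q$, rewritten via Lemma \ref{lem:semi-regular} ($k=cm$, each block meeting each group in $c$ points), should give (b). Comparing the nonzero spectrum with that of $\vect N\vect N^T$ also yields $r-\lambda_1=k$, which will be needed below.

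\emph{Sufficiency.} This is the main obstacle. Assume (a), so $\ker\vect N^T\vect N=W_0$. The nonzero eigenvalues of $\vect N^T\vect N$ are $rk$ on $\vect 1_b$ and $r-\lambda_1$ with multiplicity $m(n-1)=b-r$. The eigenspace for $r-\lambda_1$ is therefore exactly the orthogonal complement of the span of the class indicators $\vect e_i\otimes\vect 1_s$. By the spectral theorem, with $\vect P=\vect I_r\otimes\frac1s\vect J_s$,
\[
\vect N^T\vect N=(r-\lambda_1)(\vect I_b-\vect P)+\tfrac{rk}{b}\vect J_b .
\]
The diagonal entries of $\vect N^T\vect N$ equal $k$; comparing them forces $r-\lambda_1=k$ (using $rk/b=k^2/v=k/s$). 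Reading off the off-diagonal blocks then gives $\vect N_i^T\vect N_j=\frac{k^2}{v}\vect J_s$ for $i\ne j$. So any two blocks from different classes meet in the constant number $k^2/v$, which is the affine resolvability.

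The remaining point I expect to need care is the role of (b). It should be exactly what makes the constant $k^2/v$ an admissible integer intersection number given the semi-regular structure of Lemma \ref{lem:semi-regular}. I would check that bookkeeping, rewriting $k^2/v$ in terms of $c$, $m$, $n$, against the identity $r-\lambda_1=k$ derived above.
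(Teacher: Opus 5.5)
The paper does not prove this lemma; it quotes it from Raghavarao. In the paper it serves to derive $b=v-m+r$ (and hence $m=n\lambda_2$), and it is invoked once more, in the sufficiency direction, in the Remark after Theorem~\ref{thm:main}. So there is no in-paper argument to compare with, and on its own terms your proof is correct. It is the standard Bose-type rank/spectral argument, and each of its steps is sound:
\begin{itemize}
\item semi-regularity gives $\operatorname{rank}\vect{N}=v-m+1$;
\item resolvability gives the $r-1$ independent kernel vectors, hence $b\ge v-m+r$ with equality exactly when $\ker\vect{N}=W_0$;
\item in the affine case you write $\vect{N}^T\vect{N}$ explicitly;
\item for the converse you use the spectral decomposition $\vect{N}^T\vect{N}=(r-\lambda_1)(\vect{I}_b-\vect{P})+\frac{rk}{b}\vect{J}_b$ and read off the diagonal and off-diagonal blocks.
\end{itemize}
Two facts are used only tacitly. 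First, $rk\neq r-\lambda_1$, which ensures the $rk$-eigenspace is exactly $\langle\vect{1}_b\rangle$; it holds because $k\ge 2$ gives $rk\ge 2r>r-\lambda_1$. Second, $s>1$, which you need in the diagonal comparison.

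Your handling of (b), however, is confused, and the bookkeeping you propose at the end is unnecessary.
\begin{itemize}
\item \emph{Necessity.} You need neither Lemma~\ref{lem:semi-regular} nor the identity $r-\lambda_1=k$: directly, $k^2/n=m\cdot k^2/v=mq\in\mathbb{Z}$.
\item \emph{Sufficiency.} Condition (b) is never used. From (a) alone you derived $\vect{N}_i^T\vect{N}_j=\frac{k^2}{v}\vect{J}_s$ for $i\neq j$. These entries are intersection sizes, so $k^2/v$ is automatically an integer, and hence so is $k^2/n$.
\end{itemize}
Your expectation that (b) is ``what makes $k^2/v$ an admissible integer'' cannot be right. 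Condition (b) only says that $m\cdot k^2/v$ is an integer, which is strictly weaker than $k^2/v\in\mathbb{Z}$, so it could not guarantee that.

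The correct picture is this: for a resolvable SRGD design, (a) by itself is equivalent to affine resolvability, and (b) is a consequence. That is why the lemma's ``(a) and (b)'' is still a true biconditional. You should say this explicitly and replace your final paragraph with the two-line observations above.
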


Let $\vect N = (a_{ij})$ be an ARSRGD design with $\lambda_1=0$ and $k=m$ such that $V=\{x_{fh}\mid 1\le f\le m, 1\le h\le n\}$ is a set of $mn$ points and the $i$th row of its incidence matrix  corresponds to the point $x_{fh}$ with $i = (f-1)n + h$.
Then $\vect N$ is called \textit{normalized} if the $((f-1)n+1)$-th, $((f-1)n+2)$-th, \ldots, $((f-1)n+n)$-th rows correspond to the points in the same group for each $f=1,2,\ldots,m$, and for the first $n$ columns
\begin{eqnarray}\label{normalize}
 a_{ij} = 
 \left\{
 \begin{array}{cc}
  1 & (i\equiv j, \mbox{mod}\ n)\\
  0 & (i\not\equiv j, \mbox{mod}\ n)
 \end{array}
 \right.
\end{eqnarray}
with $1\le i\le mn$ and $1\le j\le n$.
Without loss of generality, we can easily obtain a normalized incidence matrix from any ARSRGD design with $\lambda_1=0$ and $k=m$ by applying row and column permutations.
Throughout this paper, we assume that the incidence matrix of any ARSRGD design is normalized.

Note that, in any ARSRGD$(m,n,k,\lambda_1,\lambda_2)$, Lemma \ref{lem:semi-regular} with $\lambda_1=0$ gives $k=m$.
Moreover, $k=m$ and $rk-v\lambda_2=0$ implies $r=n\lambda_2$.
Lemma \ref{lem:affine} gives $b=v-m+r=m(n-1)+n\lambda_2$.
On the other hand, $vr=bk$ implies $b=mnr/m=n^2\lambda_2$.
Thus, from $m(n-1)+n\lambda_2 = n^2\lambda_2$ with $n\ge 2$, it follows that $m=n\lambda_2$.
Therefore, when $n\ge 2$, we have
\begin{eqnarray*}
k=m=n\lambda_2 = r, \ v=mn=n^2\lambda_2=b
\end{eqnarray*}
 in any ARSRGD$(m,n,k,\lambda_1=0,\lambda_2)$.
Moreover, since $q=k\lambda_2/r = \lambda_2$, the transpose of the incidence matrix $\vect N$ of any ARSRGD design is also the incidence matrix of an ARSRGD design with the same parameters $(m,n,k,\lambda_1=0,\lambda_2)$, and hence $\vect N \vect N^T=\vect N^T \vect N$.
Since the design with such dual property is called \textit{symmetric}, 
affine resolvable SRGD designs with $\lambda_1=0$ can be viewed as a special class of symmetric GD designs discussed in \cite{KS2018, SS2023}.

\section{Construction from ARSRGD designs}
\label{sec:fromSRGD}
This section presents the main construction of the paper, which provides a method for constructing full PABs from ARSRGD designs.
Although the construction is simple, it yields many infinite families of full PABs.

\begin{thm}\label{thm:main}
 The existence of an ARSRGD design with parameters 
\begin{eqnarray}
v=b=mn,\ r= k=m,\ \lambda_1=0,\ \lambda_2 \label{ARSRGD}
\end{eqnarray}
implies the existence of a full PAB with parameters
\begin{eqnarray}
v^\ast=m=n\lambda_2,\ b^\ast = (m-1)n,\ r^\ast = m-1,\ k^\ast = \lambda_2,\ \lambda^\ast = \lambda_2-1. \label{para}
\end{eqnarray}
\end{thm}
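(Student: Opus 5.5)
The plan is to read the new designs directly off the rows of the normalized incidence matrix $\vect N=(a_{ij})$ of the ARSRGD design. The $m$ groups of the ARSRGD design become the $v^\ast=m$ new points. The $(m-1)n$ columns of $\vect N$ outside the first $n$ become the $b^\ast$ new blocks. The index $h\in\{1,\ldots,n\}$ (the position of a point inside its group) labels the $n=m/\lambda_2=v^\ast/k^\ast$ designs of the full PAB.

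Concretely, for each $h$ let $\vect N_h$ be the $m\times (m-1)n$ matrix whose $(f,c)$ entry is $a_{(f-1)n+h,\,c}$, for $1\le f\le m$ and $n<c\le mn$. Equivalently, block $c$ of the $h$-th design is $\{f : x_{fh}\in B_c\}$.

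I will verify the required properties in the following order.

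\begin{enumerate}
\item \emph{Disjointness and full sum.} By Lemma \ref{lem:semi-regular} with $k=m$, every block $B_c$ contains exactly one point from each group. Hence for fixed $(f,c)$ exactly one $h$ gives entry $1$, so $\sum_h \vect N_h=\vect J$.

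\item \emph{Block size.} By the normalization (\ref{normalize}), the first $n$ columns form one resolution class, and column $h$ is the block $\{x_{fh}: 1\le f\le m\}$. A column $c>n$ lies in a different resolution class. By affine resolvability it meets column $h$ in $q=\lambda_2$ points, so every block of $\vect N_h$ has size $k^\ast=\lambda_2$.

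\item \emph{Replication.} Point $x_{fh}$ lies in $r=m$ blocks, and exactly one of these (column $h$) is among the first $n$ columns. So $r^\ast=m-1$.

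\item \emph{Pair counts within one design.} Take $f\ne f'$. The points $x_{fh}$ and $x_{f'h}$ lie in different groups, so they occur together in $\lambda_2$ blocks. One of these is column $h$, which is discarded, so $\lambda^\ast=\lambda_2-1$. Thus each $\vect N_h$ is a BIBD with the parameters (\ref{para}), and $vr=bk$ checks out as $m(m-1)=(m-1)n\lambda_2$.
\end{enumerate}

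The main step is pairwise additivity, which I would check via (\ref{eq:N_1+N_2}). For $h\ne h'$ and $f\ne f'$, the $(f,f')$ entry of $(\vect N_h+\vect N_{h'})(\vect N_h+\vect N_{h'})^T$ counts columns $c>n$ weighted over the four pairs
\[
\{x_{fh},x_{f'h}\},\quad \{x_{fh'},x_{f'h'}\},\quad \{x_{fh},x_{f'h'}\},\quad \{x_{fh'},x_{f'h}\}.
\]
\begin{itemize}
\item The two same-$h$ pairs contribute $\lambda_2-1$ each, as in step 4.
\item For a cross pair, the two points lie in different groups, so they occur together in $\lambda_2$ blocks. None of these is among the first $n$ columns, since each of those columns contains only points with a common second index. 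So each cross pair contributes the full $\lambda_2$.
\end{itemize}
The total is $4\lambda_2-2$. This equals the required value $2\lambda^\ast(2k^\ast-1)/(k^\ast-1)=2(2\lambda_2-1)$ when $\lambda_2\ge2$.

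The diagonal entries are $2(m-1)=2r^\ast$, because $\vect N_h$ and $\vect N_{h'}$ have disjoint supports. Hence $\vect N_h+\vect N_{h'}$ is a BIBD with the doubled parameters, and $\{\vect N_1,\ldots,\vect N_n\}$ is a full PAB. The only delicate point I anticipate is the bookkeeping that the first resolution class never contains a cross pair. This is exactly where the normalization (\ref{normalize}) is used, and where the degenerate case $\lambda_2=1$ (blocks of size $1$) should be excluded or noted.
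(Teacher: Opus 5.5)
Your proposal is correct and follows essentially the same route as the paper. It uses the same matrices $\vect N_h^\ast$ (rows $x_{fh}$, first resolution class deleted), the same appeal to normalization and affine resolvability for $k^\ast=\lambda_2$, and the same observation that cross pairs $\{x_{fh},x_{f'h'}\}$ with $h\neq h'$ never lie in the first $n$ columns. The only cosmetic differences are two: the paper packages your cross-pair count as the identity $\vect N_{h_1}^\ast(\vect N_{h_2}^\ast)^T=\lambda_2(\vect J_m-\vect I_m)$ and symmetrizes it to get (\ref{eq:N_1N_2}), whereas you check (\ref{eq:N_1+N_2}) directly; and your use of Lemma \ref{lem:semi-regular} to get $\sum_h \vect N_h^\ast=\vect J$ is a slightly cleaner justification than the paper's.
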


\begin{proof}
  Let $\vect N=(a_{ij})$ ($1\le i\le mn, 1\le j\le mn$) be a normalized incidence matrix of an ARSRGD design with the parameters (\ref{ARSRGD}), and let $\vect x_{fh}$ be a row vector of length $mn$ in the $((f-1)n+h)$-th row of $\vect N$, where $1\le f\le m$ and $1\le h\le n$.
 For each $h$ ($1\le h\le n$), define the $m\times mn$ matrices $\vect N_{h}$ by
  \begin{eqnarray*}
   \vect N_h = 
   \begin{pmatrix}
    \vect x_{1h}\\
    \vect x_{2h}\\
    \vdots\\
    \vect x_{mh}
   \end{pmatrix}.
  \end{eqnarray*}
 Let $\vect N_{h}^\ast$ be the $m\times (m-1)n$ matrix obtained from $\vect N_{h}$ by removing its first $n$ columns.
 We show that $\{\vect N_{1}^\ast, \vect N_{2}^\ast, \ldots, \vect N_{n}^\ast\}$ forms 
 the required full PAB$(v^\ast, k^\ast, \lambda^\ast)$.

 First, we show that each $\vect N_{h}^\ast$ is an incidence matrix of BIBD($m,(m-1)n, m-1, \lambda_2, \lambda_2-1$).
 By (\ref{normalize}), in the first $n$ columns of $\vect N_h$, every entry of the $h$-th column of $\vect N_h$ is $1$ and every entry in the $h'$-th ($1\le h'<h, h<h'\le n$) column is $0$.
 Moreover, the given design satisfies
 \begin{eqnarray}\label{eq:innerproduct}
 \vect x_{fh} \cdot \vect x_{f'h'}
 =
 \left\{
 \begin{array}{cc}
  0 & (f=f')\\
  \lambda_2 & (f\neq f')
 \end{array}
 \right.,
\end{eqnarray}
 where $1\le f,f'\le m$, $1\le h,h'\le n$, and $(f,h)\neq (f',h')$.
 Hence, by removing the first $n$ columns, the inner product of two distinct rows of $\vect N_h^\ast$ is $\lambda_2-1$. 
 Therefore,
\begin{eqnarray*}\label{eq:main}
 \vect N_h^\ast (\vect N_h^\ast)^T = (m-\lambda_2)\vect I_m + (\lambda_2-1) \vect J_m.
\end{eqnarray*}
Thus, the parameters $v^\ast, b^\ast, r^\ast, \lambda^\ast$ coincide with those in (\ref{para}).
 Moreover, the affine resolvability of the given ARSRGD design implies that the inner product of any two columns of $\vect N$  belonging to different resolution sets is $\lambda_2$.
Hence, in $\vect N_{h}$, the inner product of the $h$-th and the $j$-th ($n+1\le j\le mn$) columns is also $\lambda_2$, that is, each column of $\vect N_{h}^\ast$ contains exactly $\lambda_2$ ones.  
Since the block size $k^\ast$ of each $\vect N_{h}^\ast$ is constant $\lambda_2$, each $\vect N_{h}^\ast$ is an incidence matrix of BIBD($m,(m-1)n, m-1, \lambda_2, \lambda_2-1$).

 Since every point occurs exactly once in each resolution set, the resolvability of the ARSRGD design with $\lambda_1=0$ implies that $\sum_{h=1}^{n}\vect N_h^\ast = \vect J_{m\times (m-1)n}$ holds.
 Finally, it is sufficient to prove (\ref{eq:N_1N_2}).
 Since $\lambda_1=0$, the diagonal entries of $\vect N_{h_1}^\ast (\vect N_{h_2}^\ast)^T$ are all zero.
 On the other hand, by (\ref{eq:innerproduct}), all off-diagonal entries of $\vect N_{h_1} \vect N_{h_2}^T$ are $\lambda_2$.
 Furthermore, by the normalization (\ref{normalize}), the deleted first $n$ columns contribute to the entries of $\vect N_h (\vect N _h)^T$, but not to those of $\vect N_{h_1} (\vect N_{h_2})^T$ for $h_1\neq h_2$.
Hence, the off-diagonal entries of $\vect N_{h_1}^\ast (\vect N_{h_2}^\ast)^T$ are the same as those of $\vect N_{h_1} \vect N_{h_2}^T$.
 Thus, we have
 \begin{eqnarray}
  \vect N_{h_1}^\ast  (\vect N_{h_2}^\ast)^T = \lambda_2(\vect J_m-\vect I_m)
 \end{eqnarray}
 and 
 \begin{eqnarray*}
 \vect N_{h_1}^\ast  (\vect N_{h_2}^\ast)^T + \vect N_{h_2}^\ast  (\vect N_{h_1}^\ast)^T = 2\lambda_2(\vect J_m-\vect I_m)
\end{eqnarray*}
for $h_1 \neq h_2$.
The proof is complete.
\end{proof}

\begin{rem}\rm
 Reversing the construction in Theorem \ref{thm:main} for a full PAB$(v,k,\lambda)$ yields a semi-regular GD$(v, v/k, v, 0, k)$.
If the resulting SRGD design is resolvable, then Lemma \ref{lem:affine} shows that the design is affine resolvable.
However, this reverse construction does not guarantee the resolvability of the resulting SRGD design.
\end{rem}

\section{Full PABs from Generalized Hadamard matrices}\label{sec:GHM}

Generalized Hadamard matrices provide a useful source of ARSRGD designs and are particularly suitable for obtaining concrete existence results for full PABs.

A \textit{difference matrix}, denoted by DM($n,k,\lambda$), based on a finite abelian group $G$ of order $n$ is a $k\times n\lambda$ matrix $\vect A=(a_{ij})$ with entries from $G$ such that the multi-set $\{a_{i_1j} - a_{i_2j} \mid j=1,\ldots, n\lambda\}$ contains every element of $G$ precisely $\lambda$ times for $1\le i_1<i_2\le k$.
Note that a DM($n, k,\lambda$) is also called a difference scheme (e.g., \cite{HSS1999}).

A \textit{generalized Hadamard matrix}, denoted by GH($n,\lambda$), is a DM($n, n\lambda, \lambda$), i.e., an $n\lambda \times n\lambda$ matrix.
Construction methods of ARSRGD designs from difference matrices (or generalized Hadamard matrices) have been discussed in the literature (e.g., \cite{KK2018,KS2018,SS2023}).
It is called \textit{normalized} if all the symbols in the first row and first column are the identity element of the group.
Moreover, GH($n,\lambda$) has the following property.

\begin{lem}[\cite{B1988}]\label{lem:GHproperty}
 The transpose of a generalized Hadamard matrix is again a generalized Hadamard matrix.
\end{lem}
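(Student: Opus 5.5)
The plan is to prove Lemma~\ref{lem:GHproperty} by translating the difference property of $\vect A$ into an orthogonality statement for characters of $G$, and then transferring that statement from $\vect A$ to $\vect A^T$ by a matrix identity. Let $\vect A=(a_{ij})$ be a GH$(n,\lambda)$ over the abelian group $G$ of order $n$, and put $N=n\lambda$. For each character $\chi$ of $G$, define the $N\times N$ complex matrix $\vect A_\chi=(\chi(a_{ij}))$.

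\textbf{Character reformulation.} The defining condition on rows $i_1\neq i_2$ says that the multiset $\{a_{i_1j}-a_{i_2j}\}$ is $\lambda$ copies of $G$. We will show this is equivalent to
\[
\sum_j \chi(a_{i_1j})\overline{\chi(a_{i_2j})}=0
\]
for every nontrivial character $\chi$. One direction is immediate: the sum equals $\lambda\sum_{g\in G}\chi(g)=0$. For the converse, let $m_g$ be the multiplicity of $g$ among the differences. The vanishing sums say that the Fourier transform of $(m_g)$ vanishes at every nontrivial $\chi$. By Fourier inversion on the finite abelian group $G$, the function $m$ is therefore constant, and it equals $N/n=\lambda$ because the row length is $N=n\lambda$. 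Consequently, $\vect A$ is a GH$(n,\lambda)$ if and only if
\[
\vect A_\chi \vect A_\chi^{*}=N\vect I_N
\]
for all nontrivial $\chi$; the diagonal entries are automatic, since $|\chi(a)|=1$.

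\textbf{Transfer to the transpose.} The matrix identity $\vect A_\chi\vect A_\chi^*=N\vect I$ means that $N^{-1/2}\vect A_\chi$ is unitary. Since the matrix is square, this gives $\vect A_\chi^*\vect A_\chi=N\vect I$ as well. The latter identity is the column orthogonality of $\vect A$ for the character $\chi$, that is, the row condition for $\vect A^T$, because $(\vect A^T)_\chi=\vect A_\chi^T$. Conjugating if needed, we get $\vect A_\chi^T\overline{\vect A_\chi}=N\vect I$, which is exactly the required row condition. Applying the equivalence from the first step to $\vect A^T$ then shows that $\vect A^T$ is a GH$(n,\lambda)$.

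\textbf{Main obstacle.} The only delicate point is the converse direction of the first step. It must use that the characters of the finite abelian group $G$ separate points and form an orthogonal basis of functions on $G$, and it must use the exact count $N=n\lambda$ to pin the constant multiplicity at $\lambda$. Squareness of $\vect A$ is essential: it is precisely what allows the passage from $\vect A_\chi\vect A_\chi^*=N\vect I$ to $\vect A_\chi^*\vect A_\chi=N\vect I$ in the second step.
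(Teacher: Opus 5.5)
Your proof is correct. The paper gives no proof of this lemma and only cites \cite{B1988}, so there is no in-paper argument to compare against. What you supply is the standard self-contained character argument for this fact, and every step checks out:
\begin{itemize}
\item For a fixed pair of rows, Fourier inversion on $G$ together with the count $N=n\lambda$ makes the difference condition equivalent to the vanishing of the off-diagonal entries of $\vect A_\chi\vect A_\chi^{*}$ for every nontrivial $\chi$.
\item Squareness upgrades $\vect A_\chi\vect A_\chi^{*}=N\vect I_N$ to $\vect A_\chi^{*}\vect A_\chi=N\vect I_N$.
\item Complex conjugation turns this into $(\vect A^T)_\chi(\vect A^T)_\chi^{*}=N\vect I_N$, and the converse direction of your first step finishes the proof.
\end{itemize}

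It would be worth stating explicitly where commutativity of $G$ is used. Multiplicativity $\chi(a-b)=\chi(a)\overline{\chi(b)}$ holds for linear characters of any group, so that is not the point. The real use is in the inversion step: you need the linear characters to form a complete orthogonal system on $G$, which happens only when $G$ is abelian. If you run the same argument over a non-abelian group with irreducible unitary representations, you only get uniformity of the multisets $\{a_{ij_1}^{-1}a_{ij_2}\}$. That says the transpose of the entrywise-inverted matrix is a GH matrix, which is a different condition from $\vect A^T$ being one. This matches the paper's remark, via \cite{CL2009}, that the lemma fails for non-abelian groups.
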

This result does not generalize to non-abelian groups, as shown by \cite{CL2009}.
Since our construction requires the property in Lemma \ref{lem:GHproperty}, throughout this paper we define difference matrices over abelian groups.
Many classes of generalized Hadamard matrices on an abelian group $G$ are known in the literature 
(see \cite{C2007,HSS1999,L2007}).
For example, the following existence results are known.

\begin{lem}[\cite{C2007,L2007}]\label{lem:GH}
 There exist a GH$(q,2)$ and a GH$(q,4)$ for any prime or prime power $q\ge 2$.
\end{lem}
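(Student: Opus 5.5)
The statement is quoted from \cite{C2007,L2007}, so it could simply be cited. To verify it, I would give explicit constructions, treating $q$ even and $q$ odd separately. In both cases the group $G$ is the additive group of $\mathrm{GF}(q)$, and every verification reduces to counting solutions of linear or quadratic equations over a finite field.

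\emph{Case $q=2^a$.} Put $F=\mathrm{GF}(q\lambda)$ with $\lambda\in\{2,4\}$, which is again a field of characteristic $2$. Choose a surjective additive homomorphism $\varphi:F\to G\cong(\mathbb Z_2)^a$ with kernel of order $\lambda$; this is possible since both groups are elementary abelian $2$-groups. Define the $q\lambda\times q\lambda$ matrix $\vect A=(\varphi(xy))_{x,y\in F}$. Take two distinct rows $x\neq x'$. As $y$ runs over $F$, the element $(x-x')y$ runs over all of $F$, and $\varphi$ is exactly $\lambda$-to-one. Hence every element of $G$ occurs $\lambda$ times among the differences, and $\vect A$ is a GH$(q,\lambda)$.

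\emph{Case $q$ odd, $\lambda=2$.} Index rows by $(x,i)$ and columns by $(y,j)$, with $x,y\in\mathrm{GF}(q)$ and $i,j\in\{0,1\}$. Fix distinct $c_0,c_1$, a square $d_0$ and a nonsquare $d_1$, and set the entry to $xy+c_id_jy^2$. Two distinct rows $(x,i)$ and $(x',i')$ fall into three cases.
\begin{itemize}
\item If $i=i'$, the difference $(x-x')y$ is linear with nonzero slope. Each of the two column blocks then covers $G$ once, so every element occurs twice.
\item If $x=x'$ and $i\neq i'$, the difference is $e\,d_jy^2$ with $e\neq0$. The value $0$ occurs once in each block. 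Among nonzero elements, one block hits each element of one square class twice and the other block hits each element of the other class twice. So every element occurs twice.
\item If $x\neq x'$ and $i\neq i'$, the difference is $ed_jy^2+by$. Completing the square reduces this to the previous case shifted by a constant, so the count is again $2$ for every element.
\end{itemize}

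\emph{Case $q$ odd, $\lambda=4$.} This is where I expect the main obstacle. I would extend the same template to rows $(x,i)$ and columns $(y,j)$ with $i,j\in\{0,1,2,3\}$ and entries $xy+c_id_jy^2+e_{ij}$. The $d_j$ would again be split equally between squares and nonsquares, and the $c_i$ chosen distinct. For $i\neq i'$ the square-class balancing should again give every element $4$ times. The difficulty is rows with $i\neq i'$ but $c_i=c_{i'}$, which would be forced if fewer than four distinct $c$'s were used, since then the difference reduces to a pure constant shift $e_{ij}-e_{i'j}$. This needs a choice of $e_{ij}$ or $d_j$ that keeps all pairs balanced. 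I would first try four distinct values $c_i$ with all $e_{ij}=0$, check the counts directly, and only introduce the shifts $e_{ij}$ if some pair fails. If the direct construction still resists, I would fall back on the cited sources \cite{C2007,L2007}.
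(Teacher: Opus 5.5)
The paper gives no proof of this lemma. It is quoted from \cite{C2007,L2007} and used as a black box, so your opening remark and your stated fallback match the paper's treatment. Your construction for $q=2^a$ is correct: a $\lambda$-to-one additive map $\varphi:\mathrm{GF}(q\lambda)\to(\mathbb{Z}_2)^a$ and the matrix $(\varphi(xy))$ do give a GH$(q,\lambda)$.

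The explicit verification for odd $q$, however, fails.
\begin{itemize}
\item In your third case ($x\neq x'$, $i\neq i'$), put $b=x-x'$ and $e=c_i-c_{i'}$. Completing the square in column block $j$ gives the multiset $s_j+\{e d_j z^2: z\in\mathrm{GF}(q)\}$ with $s_j=-b^2/(4ed_j)$.
\item This constant depends on $j$. Since $b,e\neq0$ and $d_0\neq d_1$, we have $s_0\neq s_1$.
\item Hence $s_0$ occurs once in block $0$ and $0$ or $2$ times in block $1$. The total is odd, so it is never $2$.
\item Concretely, take $q=3$, $d_0=1$, $d_1=2$, $c_0=0$, $c_1=1$. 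The rows $(1,1)$ and $(0,0)$ have differences $y^2+y$ and $2y^2+y$ for $y\in\mathrm{GF}(3)$. This is the multiset $\{0,2,0\}\cup\{0,0,1\}$, in which $0$ occurs four times.
\end{itemize}
So $(xy+c_id_jy^2)$ is never a GH$(q,2)$.

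This $j$-dependence of the completed-square constant is also the real obstacle for $\lambda=4$; coincidences among the $c_i$ are not the issue. With four distinct $c_i$, distinct $d_j$ and all $e_{ij}=0$, the constants $-b^2/(4ed_j)$ are again pairwise distinct, so $s_0$ again occurs an odd number of times. Constant shifts $e_{ij}$ do not depend on $x,x'$. The mismatch $-\frac{b^2}{4e}\bigl(d_j^{-1}-d_{j'}^{-1}\bigr)$ varies with $b^2$, so constant shifts cannot repair it in general; you need $x$-dependent corrections.

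For $\lambda=2$ such a correction works. Take entries $\beta_{ij}xy+\alpha_{ij}y^2+\gamma_{ij}x^2$ with:
\begin{itemize}
\item $\alpha_{00}-\alpha_{10}=1$ and $\alpha_{01}-\alpha_{11}=\nu$, where $\nu$ is a nonsquare;
\item $\beta_{11}=\nu$ and all other $\beta_{ij}=1$;
\item $\gamma_{00}=\frac14-\frac1{4\nu}$, $\gamma_{10}=\frac{\nu-1}{4}$, and $\gamma_{01}=\gamma_{11}=0$.
\end{itemize}
Then the completed-square constants of the two blocks coincide identically in $x,x'$, and the same-$i$ case is still linear in $y$ with nonzero slope. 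As written, though, your argument proves the lemma only for even $q$. For odd $q$, with both $\lambda=2$ and $\lambda=4$, it rests entirely on the citation, just as the paper does.
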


Although the following construction is equivalent to applying Theorem \ref{thm:main} to the ARSRGD design obtained from a generalized Hadamard matrix, we present it directly in terms of generalized Hadamard matrices, avoiding the intermediate construction of the ARSRGD design. 
A similar construction based on deleting the first column of a normalized generalized Hadamard matrix has appeared in related constructions of combinatorial designs (see, e.g., \cite{S2026}).
The following theorem shows that deleting the first column of a normalized generalized Hadamard matrix leads directly to full PABs.

\begin{thm}\label{thm:main2}
 The existence of a GH($n,\lambda$) implies the existence of a full PAB with parameters
 \begin{eqnarray*}
  v^\ast = n\lambda,\ b^\ast = n(n\lambda-1),\ r^\ast =n\lambda-1,\  k^\ast = \lambda,\ \lambda^\ast = \lambda-1.
 \end{eqnarray*}
\end{thm}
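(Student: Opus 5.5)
Let $\vect H=(h_{ij})$ be a GH$(n,\lambda)$ over an abelian group $G$ of order $n$. I will normalize it so that its first row and first column consist of the identity $0$; this is done by subtracting $h_{i1}$ from row $i$ and then $h_{1j}$ from column $j$. Row translations preserve the difference property, and so do column translations, since $(a+g)-(b+g)=a-b$. By Lemma \ref{lem:GHproperty}, $\vect H^T$ is again a GH$(n,\lambda)$. Hence for any two distinct columns $j\neq j'$ the multiset $\{h_{ij}-h_{ij'}\}_i$ contains every element of $G$ exactly $\lambda$ times.

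Write $G=\{g_1,\dots,g_n\}$. For each $g\in G$, define the $n\lambda\times n(n\lambda-1)$ matrix $\vect N_g^\ast$ as follows. Its rows are indexed by $i\in\{1,\dots,n\lambda\}$ and its columns by pairs $(j,x)$ with $2\le j\le n\lambda$ and $x\in G$. The entry in position $(i,(j,x))$ is $1$ exactly when $h_{ij}=x+g$. This is Theorem \ref{thm:main} applied to the standard ARSRGD design of $\vect H$, with the first column class deleted; I will nevertheless verify everything directly from the GH properties.

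\textbf{Disjointness and sum.} For fixed $(i,(j,x))$ exactly one $g$ satisfies $h_{ij}=x+g$. Hence $\sum_g \vect N_g^\ast=\vect J$ and the supports are pairwise disjoint.

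\textbf{Each $\vect N_g^\ast$ is a BIBD.} The size of column $(j,x)$ is $\#\{i: h_{ij}=x+g\}$. Since $h_{i1}=0$, this equals $\#\{i:h_{ij}-h_{i1}=x+g\}=\lambda$, using the column difference property from the transpose; so $k^\ast=\lambda$. Each row has exactly one $1$ for each $j\ge 2$, so $r^\ast=n\lambda-1$. For rows $i\ne i'$, the inner product counts pairs $(j,x)$ with $j\ge2$ and $h_{ij}=h_{i'j}=x+g$. Since $x$ is determined by $j$, this is the number of $j\ge2$ with $h_{ij}-h_{i'j}=0$. Over all $j$, the difference $h_{ij}-h_{i'j}$ takes the value $0$ exactly $\lambda$ times, and $j=1$ contributes one of these. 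So the count is $\lambda-1$, and $\vect N_g^\ast(\vect N_g^\ast)^T=(n\lambda-1-(\lambda-1))\vect I+(\lambda-1)\vect J$.

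\textbf{Pairwise additivity.} For $g\ne g'$ I aim to show $\vect N_g^\ast(\vect N_{g'}^\ast)^T=\lambda(\vect J-\vect I)$, which gives (\ref{eq:N_1N_2}).
\begin{itemize}
\item The diagonal is $0$, because the supports are disjoint.
\item For $i\ne i'$, the entry counts $j\ge2$ with $h_{ij}-h_{i'j}=g-g'\neq0$. Over all $j$ this value occurs $\lambda$ times, and the column $j=1$ gives difference $0$, so it is not among them. The count is therefore exactly $\lambda$.
\end{itemize}
Finally, I will check that $\lambda^\ast$ in (\ref{eq:N_1N_2}) matches: $\lambda^\ast-2\lambda$ with $\lambda^\ast=2(\lambda-1)(2\lambda-1)/(\lambda-2)$ has to equal $2\lambda$ under this paper's parameter naming. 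Equivalently, $2k^\ast$ blocks with pair count $2(\lambda-1)+2\lambda$ must agree with the BIBD relation for $v=n\lambda$, $r=2(n\lambda-1)$, $k=2\lambda$, namely $r(k-1)=\lambda^{\ast\ast}(v-1)$ with $\lambda^{\ast\ast}=4\lambda-2$, which holds.

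The main obstacle is the correct use of the transpose property: the block-size and normalization arguments need column differences, and those come only from Lemma \ref{lem:GHproperty}. I also need to confirm that the normalization preserves the GH property, which is the translation argument given at the start.
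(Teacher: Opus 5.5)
Your proposal is correct and is essentially the paper's proof: your $\vect N_g^\ast$ is the paper's $\vect N_h^\ast$ with $x_h=-g$, and you verify the block size $\lambda$, the $\lambda-1$ concurrences and $\vect N_g^\ast(\vect N_{g'}^\ast)^T=\lambda(\vect J-\vect I)$ the same way, using the transpose lemma and the all-zero first column. One small slip in your final check: with block size $\lambda$ and index $\lambda-1$, the paper's formula gives $\lambda^\ast=2(\lambda-1)(2\lambda-1)/(\lambda-1)=4\lambda-2$ (denominator $\lambda-1$, not $\lambda-2$), so $\lambda^\ast-2(\lambda-1)=2\lambda$ as required, which your alternative check via $r(k-1)=\lambda^{\ast\ast}(v-1)$ already confirms.
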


\begin{proof}
 Let $\vect A=(a_{ij})$ be a normalized GH($n,\lambda$) on an abelian group $(G,+)$ with $G=\{x_1,x_2,\ldots,x_{n}\}$.
Further let $\vect A_h = (a_{ij}^{(h)})$ with $a_{ij}^{(h)} = a_{ij} + x_h$ ($1\le i,j\le n\lambda, 1\le h\le n$) be the $n\lambda \times n\lambda$ matrix.
Then, each $\vect A_h$ is also a GH($n,\lambda$). 

Now, we construct matrices $\vect N_h^\ast$ from the generalized Hadamard matrices $\vect A_h$.
Let $\vect A_h^\ast$ be an $n\lambda \times (n\lambda-1)$ matrix obtained by removing the first column of $\vect A_h$.
Then, replacing each entry $a_{ij}^{(h)} = x_{g}\in G$ of $\vect A_h^\ast$ by the $g$-th row vector of the identity matrix $\vect I_n$, we obtain $n\lambda \times n(n\lambda-1)$ matrices $\vect N_h^\ast$.
We show that the matrices $\vect N_h^\ast$ form the required full PAB.

By Lemma \ref{lem:GHproperty}, each column of $\vect A_h^\ast$ contains every element of $G$ exactly $\lambda$ times.
Hence, each column of $\vect N_h^\ast$ contains exactly $\lambda$ ones.
Since $\vect A$ is normalized, $\vert \{ j \mid a_{i_1j}^{(h)} = a_{i_2j}^{(h)}, 2\le j\le n\lambda \}\vert = \lambda -1$ holds for each $1\le i_1<i_2\le n\lambda$.
Hence, each $\vect N_h^\ast$ is an incidence matrix of a BIBD$(n\lambda,n(n\lambda-1), n\lambda-1, \lambda,\lambda-1)$.
Since $\{a_{ij}+x_h \mid 1\le h\le n\}=G$, we have $\sum_{h=1}^n \vect N_h^\ast = \vect J_{n\lambda\times n(n\lambda -1)}$.

On the other hand, 
$\vert \{ j \mid a_{i_1j}^{(h_1)} = a_{i_2j}^{(h_2)}, 2\le j\le n\lambda\}\vert 
= \vert \{ j \mid a_{i_1j}+x_{h_1} = a_{i_2j} +x_{h_2}, 2\le j\le n\lambda\}\vert 
= \vert \{ j \mid a_{i_1j} - a_{i_2j}= x_{h_2}-x_{h_1}, 2\le j\le n\lambda\}\vert
= \lambda$ 
for any $1\le i_1<i_2\le n\lambda$ and $1\le h_1 < h_2\le n$.
Since $x_{h_1}\neq x_{h_2}$ for $h_1\neq h_2$, we have $a_{ij}^{(h_1)}\neq a_{ij}^{(h_2)}$.
Therefore, the diagonal entries of $\vect N_{h_1}^\ast  (\vect N_{h_2}^\ast)^T$ are zero, while all off-diagonal entries are $\lambda$.
Thus, 
\begin{eqnarray}\label{eq:main2}
  \vect N_{h_1}^\ast  (\vect N_{h_2}^\ast)^T = \lambda(\vect J_{n\lambda}-\vect I_{n\lambda})
 \end{eqnarray}
for $h_1 \neq h_2$.
Equation (\ref{eq:main2}) implies (\ref{eq:N_1N_2}), and therefore the matrices $\vect N_h^\ast$ form the required full PAB.
The proof is complete.
\end{proof}

For example, Theorem \ref{thm:main2} with a GH($q,2$) and a GH($q,4$) obtained by Lemma \ref{lem:GH} yields infinite families of full PABs for any prime or prime power $q\ge 2$ as follows.
\begin{Cor}\label{thm:main3}
 There exist a full PAB$(2q,2,1)$ and a full PAB$(4q,4,3)$ for any prime or prime power $q\ge 2$.
\end{Cor}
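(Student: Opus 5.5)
The corollary follows by feeding the generalized Hadamard matrices of Lemma \ref{lem:GH} into Theorem \ref{thm:main2}. Fix a prime or prime power $q\ge 2$. By Lemma \ref{lem:GH} there exist a GH$(q,2)$ and a GH$(q,4)$, both over the elementary abelian group of order $q$. This group is abelian, so Lemma \ref{lem:GHproperty} applies, and that is the hypothesis Theorem \ref{thm:main2} relies on.

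Applying Theorem \ref{thm:main2} with $n=q$ and $\lambda=2$ gives a full PAB with
\[
v^\ast=2q,\quad b^\ast=q(2q-1),\quad r^\ast=2q-1,\quad k^\ast=2,\quad \lambda^\ast=1,
\]
which is a full PAB$(2q,2,1)$. Applying the theorem with $n=q$ and $\lambda=4$ gives
\[
v^\ast=4q,\quad b^\ast=q(4q-1),\quad r^\ast=4q-1,\quad k^\ast=4,\quad \lambda^\ast=3,
\]
which is a full PAB$(4q,4,3)$.

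The only remaining point is that these designs are genuinely full. In each case $v^\ast/k^\ast=q=n$, and the construction in Theorem \ref{thm:main2} produces exactly $n$ incidence matrices $\vect N_1^\ast,\ldots,\vect N_n^\ast$ whose sum is the all-one matrix. Hence $\ell=v^\ast/k^\ast$, as the definition requires.

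The standard relations also hold. For example, for the first design $\lambda^\ast(v^\ast-1)=2q-1=r^\ast(k^\ast-1)$, and the second is checked the same way. There is no real obstacle in this argument. The only thing to confirm is that the cited generalized Hadamard matrices are over abelian groups; the standard constructions over $GF(q)$ are, so Theorem \ref{thm:main2} applies directly.
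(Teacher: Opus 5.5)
Your proposal is correct and is essentially the paper's argument. The corollary comes from feeding the GH$(q,2)$ and GH$(q,4)$ of Lemma~\ref{lem:GH} into Theorem~\ref{thm:main2} with $n=q$ and $\lambda=2,4$. Your extra checks (fullness, the parameter identities, the abelian group) are harmless, but they are already covered by Theorem~\ref{thm:main2} and by the paper's standing convention that difference matrices are taken over abelian groups.
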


The following example illustrates Theorem \ref{thm:main2} using a GH$(5,2)$.

\begin{exam}\label{exam:10}\rm
 The following array $A$ is a GH$(5,2)$ in Table 6.35 of \cite{HSS1999}.
 \begin{align*}
  \vect A=
  \left(\begin{array}{cccccccccc}
  0 & 0 & 0 & 0 & 0 & 0 & 0 & 0 & 0 & 0 \\
  0 & 4 & 3 & 1 & 2 & 1 & 0 & 4 & 2 & 3 \\
  0 & 3 & 1 & 2 & 4 & 4 & 2 & 0 & 1 & 3 \\
  0 & 1 & 2 & 4 & 3 & 1 & 2 & 3 & 0 & 4 \\
  0 & 2 & 4 & 3 & 1 & 4 & 1 & 3 & 2 & 0 \\
  0 & 2 & 3 & 2 & 3 & 0 & 4 & 1 & 4 & 1 \\
  0 & 1 & 1 & 3 & 0 & 2 & 4 & 4 & 3 & 2 \\
  0 & 0 & 4 & 4 & 2 & 3 & 3 & 1 & 1 & 2 \\
  0 & 3 & 0 & 1 & 1 & 2 & 3 & 2 & 4 & 4 \\
  0 & 4 & 2 & 0 & 4 & 3 & 1 & 2 & 3 & 1 
  \end{array}\right).
 \end{align*}
 Deleting the first column of $\vect A$ gives $\vect A^\ast$, from which a full PAB$(10,2,1$) is obtained, whose incidence matrices $\vect N_h^\ast$ ($1\le h\le 5$) are $10\times 45$ matrices.
 \begin{align*}
  \vect N_h^\ast=
  \left(\begin{array}{ccccccccc}
  \vect v_{0+h} & \vect v_{0+h} & \vect v_{0+h} & \vect v_{0+h} & \vect v_{0+h} & \vect v_{0+h} & \vect v_{0+h} & \vect v_{0+h} & \vect v_{0+h}  \\
  \vect v_{4+h} & \vect v_{3+h} & \vect v_{1+h} & \vect v_{2+h} & \vect v_{1+h} & \vect v_{0+h} & \vect v_{4+h} & \vect v_{2+h} & \vect v_{3+h}  \\
  \vect v_{3+h} & \vect v_{1+h} & \vect v_{2+h} & \vect v_{4+h} & \vect v_{4+h} & \vect v_{2+h} & \vect v_{0+h} & \vect v_{1+h} & \vect v_{3+h}  \\
  \vect v_{1+h} & \vect v_{2+h} & \vect v_{4+h} & \vect v_{3+h} & \vect v_{1+h} & \vect v_{2+h} & \vect v_{3+h} & \vect v_{0+h} & \vect v_{4+h}  \\
  \vect v_{2+h} & \vect v_{4+h} & \vect v_{3+h} & \vect v_{1+h} & \vect v_{4+h} & \vect v_{1+h} & \vect v_{3+h} & \vect v_{2+h} & \vect v_{0+h}  \\
  \vect v_{2+h} & \vect v_{3+h} & \vect v_{2+h} & \vect v_{3+h} & \vect v_{0+h} & \vect v_{4+h} & \vect v_{1+h} & \vect v_{4+h} & \vect v_{1+h}  \\
  \vect v_{1+h} & \vect v_{1+h} & \vect v_{3+h} & \vect v_{0+h} & \vect v_{2+h} & \vect v_{4+h} & \vect v_{4+h} & \vect v_{3+h} & \vect v_{2+h}  \\
  \vect v_{0+h} & \vect v_{4+h} & \vect v_{4+h} & \vect v_{2+h} & \vect v_{3+h} & \vect v_{3+h} & \vect v_{1+h} & \vect v_{1+h} & \vect v_{2+h}  \\
  \vect v_{3+h} & \vect v_{0+h} & \vect v_{1+h} & \vect v_{1+h} & \vect v_{2+h} & \vect v_{3+h} & \vect v_{2+h} & \vect v_{4+h} & \vect v_{4+h}  \\
  \vect v_{4+h} & \vect v_{2+h} & \vect v_{0+h} & \vect v_{4+h} & \vect v_{3+h} & \vect v_{1+h} & \vect v_{2+h} & \vect v_{3+h} & \vect v_{1+h} 
  \end{array}\right),
 \end{align*}
where $\vect v_0 = (1,0,0,0,0), \vect v_1 = (0,1,0,0,0), \vect v_2 = (0,0,1,0,0), \vect v_3 = (0,0,0,1,0), \vect v_4 = (0,0,0,0,1)$, and the subscripts are taken modulo $5$.
 \end{exam}

\section{Concluding remarks}\label{sec:conclusion}

We summarize the full PABs obtained by our construction.
First, a full PAB$(20,4,3)$, a full PAB$(14,2,1)$, 
and a full PAB$(18,2,1)$ obtained by Theorem \ref{thm:main2} establish the existence of full PABs for parameters (Nos.\,7, 11, and 15 in Table \ref{table} in Section \ref{sec:introduction}) whose existence was previously unknown in \cite{SMMKK2007}.
 
In addition to Lemma \ref{lem:GH}, many classes of generalized Hadamard matrices are known in the literature (e.g., \cite{HSS1999, L2007}).
Hence, our method can be expected to establish further existence results.
In particular, the resulting designs attain the lower bound in (\ref{eq:lambda}) if $\lambda$ in GH($s,\lambda$) is even.
Unfortunately, our approach cannot provide any full PAB with odd $k$ and $\lambda = (k-1)/2$.
Moreover, many full PABs remain unknown, even in the case $\lambda = k-1$.
For example, even the existence of a full PAB$(12,2,1)$ remains unknown.

Note that each of the conditions (\ref{eq:main}) and (\ref{eq:main2}) is stronger than (\ref{eq:N_1N_2}), since each immediately implies (\ref{eq:N_1N_2}), whereas the converse does not hold. 
Therefore, the designs constructed in Theorems \ref{thm:main} and \ref{thm:main2} form a special class of PABs.
This class can be studied within the framework of symmetric balanced nested designs introduced in \cite{FKKMS2002}.

On the other hand, PABs are closely related to combinatorial multi-arrays, called perpendicular multi-arrays and ordered multi-designs (see \cite{LLDC2018,MK2021,MK2022}).
For example, Theorem \ref{thm:main2} yields new existence results for ordered multi-designs whose existence is unknown in \cite{MK2021}.
Further constructions and existence problems of full PABs and combinatorial multi-arrays will be discussed in a forthcoming paper.

\vspace{1em}
\noindent
\textbf{Acknowledgments}

\smallskip
\noindent
The authors would like to express their sincere gratitude to the late Professor Sanpei Kageyama for valuable discussions that helped lay the foundation for this research. 
This work was partially supported by JSPS KAKENHI Grant Number JP23K02359.
The authors acknowledge the use of ChatGPT (accessed August 2026) to assist with language editing and to provide suggestions for improving the clarity and organization of the manuscript. 
All AI-assisted suggestions were reviewed and revised by the authors, who take full responsibility for the content of the manuscript.

\medskip
\noindent
\textbf{Conflicts of Interest}

\smallskip
\noindent
The authors declare no conflicts of interest.

\medskip
\noindent
\textbf{Data Availability Statement}

\smallskip
\noindent
Data sharing is not applicable to this article, as no datasets were generated or analyzed during the current study.

\end{document}